\newtheorem{theorem}{Theorem}
\newtheorem{condition}{Condition}
\newtheorem{lemma}{Lemma}
\newenvironment{proof}[1][Proof]{\textbf{#1.} }{\ \rule{0.5em}{0.5em}}
\renewcommand{\cite}{\citeasnoun}
\renewcommand{\thepage}{}
\renewcommand{\thefootnote}{\fnsymbol{footnote}}
\renewcommand{\baselinestretch}{1.20}
\begin{document}

\title{Refining the Central Limit Theorem Approximation via Extreme Value
Theory}
\author{Ulrich K. M\"{u}ller \\
Economics Department\\
Princeton University}
\date{February 2017}
\maketitle

\begin{abstract}
We suggest approximating the distribution of the sum of independent and
identically distributed random variables with a Pareto-like tail by
combining extreme value approximations for the largest summands with a
normal approximation for the sum of the smaller summands. If the tail is
well approximated by a Pareto density, then this new approximation has
substantially smaller error rates compared to the usual normal approximation
for underlying distributions with finite variance and less than three
moments. It can also provide an accurate approximation for some infinite
variance distributions.

\textbf{AMS 2000 subject classification: }60F05, 60G70, 60E07

\textbf{Key words and phrases: }Regular variation, rates of convergence
\end{abstract}

\newpage \setcounter{page}{1} \renewcommand{\thepage}{\arabic{page}} %
\renewcommand{\thefootnote}{\arabic{footnote}} 
\renewcommand{\baselinestretch}{1.3} \small \normalsize%

\section{Introduction}

Consider approximations to the distribution of the sum $S_{n}=%
\sum_{i=1}^{n}X_{i}$ of independent mean-zero random variables $X_{i}$ with
distribution function $F$. If $\sigma _{0}^{2}=\int x^{2}dF(x)$ exists, then 
$n^{-1/2}S_{n}$ is asymptotically normal by the central limit theorem. The
quality of this approximation is poor if $\max_{i\leq n}|X_{i}|$ is not much
smaller than $n^{1/2}$, since then a single non-normal random variable has
non-negligible influence on $n^{-1/2}S_{n}$. Extreme value theory provides
large sample approximations to the behavior of the largest observations,
suggesting that it may be fruitfully employed in the derivation of better
approximations to the distribution of $S_{n}$.

For simplicity, consider the case where $F$ has a light left tail and a
heavy right tail. Specifically, assume $\int_{-\infty
}^{0}|x|^{3}dF(x)<\infty $ and 
\begin{equation}
\lim_{x\rightarrow \infty }\frac{1-F(x)}{x^{-1/\xi }}=\omega ^{1/\xi }\text{%
, }\omega >0  \label{Par_intro}
\end{equation}%
for $1/3<\xi <1$, so that the right tail of $F$ is approximately Pareto with
shape parameter $1/\xi $ and scale parameter $\omega $. Let $X_{i:n}$ be the
order statistics. For a given sequence $k=k(n)$, $1\leq k<n$, split $S_{n}$
into two pieces%
\begin{equation}
S_{n}=\sum_{i=1}^{n-k}X_{i:n}+\sum_{i=1}^{k}X_{n-i+1:n}.  \label{S_split}
\end{equation}%
Note that conditional on the $n-k$th order statistic $T_{n}=X_{n-k+1:n},$ $%
\sum_{i=1}^{n-k}X_{i:n}$ has the same distribution as $\sum_{i=1}^{n-k}%
\tilde{X}_{i}$, where $\tilde{X}_{i}$ are i.i.d. from the truncated
distribution $\tilde{F}_{T_{n}}(x)$ with $\tilde{F}_{t}(x)=F(x)/F(t)$ for $%
x\leq t$ and $\tilde{F}_{t}(x)=1$ otherwise. Let $\mu (t)$ and $\sigma
^{2}(t)$ be the mean and variance of $\tilde{F}_{t}$. Since $\tilde{F}%
_{T_{n}}$ is less skewed than $F$, one would expect the distributional
approximation (denoted by \textquotedblleft $\overset{a}{\sim }$%
\textquotedblright ) of the central limit theorem, 
\begin{equation}
\sum_{i=1}^{n-k}X_{i:n}|T_{n}\overset{a}{\sim }(n-k)\mu
(T_{n})+(n-k)^{1/2}\sigma (T_{n})Z\text{ \ \ for\ \ }Z\sim \mathcal{N}(0,1)
\label{normal_intro}
\end{equation}%
to be relatively accurate. At the same time, extreme value theory implies
that under (\ref{Par_intro}),%
\begin{equation}
\sum_{i=1}^{k}X_{n-i+1:n}\overset{a}{\sim }n^{\xi }\omega
\sum_{i=1}^{k}\Gamma _{i}^{-\xi }\text{\ for }\Gamma _{i}=\sum_{j=1}^{i}E_{j}%
\text{, }E_{j}\sim \text{i.i.d. exponential.}  \label{EVT_intro}
\end{equation}%
Combining (\ref{normal_intro}) and (\ref{EVT_intro}) suggests%
\begin{equation}
S_{n}\overset{a}{\sim }(n-k)\mu (n^{\xi }\Gamma _{k}^{-\xi })+n^{1/2}\sigma
(n^{\xi }\Gamma _{k}^{-\xi })Z+n^{\xi }\omega \sum_{i=1}^{k}\Gamma
_{i}^{-\xi }  \label{S_approx_intro}
\end{equation}%
with $Z$ independent of $(\Gamma _{i})_{i=1}^{k}$.

If $\xi <1/2$, the approximate Pareto tail (\ref{Par_intro}) and $\mathbb{E}%
[X_{1}]=0$ imply 
\begin{equation*}
\mu (x)\approx -\frac{\omega ^{1/\xi }x^{1-1/\xi }}{(1-\xi )(1-(x/\omega
)^{-1/\xi })}
\end{equation*}%
and $\sigma ^{2}(x)\approx \sigma _{0}^{2}-\omega ^{1/\xi }\frac{1}{1-2\xi }%
x^{2-1/\xi }$ for $x$ large. From $(n-k)/(n-\Gamma _{k})\overset{a}{\sim }1$%
, this further yields%
\begin{equation}
S_{n}\overset{a}{\sim }-n^{\xi }\frac{\omega }{1-\xi }\Gamma _{k}^{1-\xi
}+n^{1/2}\left( \sigma _{0}^{2}-\frac{\omega ^{2}}{1-2\xi }(\Gamma
_{k}/n)^{1-2\xi }\right) _{+}^{1/2}Z+n^{\xi }\omega \sum_{i=1}^{k}\Gamma
_{i}^{-\xi }  \label{S_scndappr_intro}
\end{equation}%
with $(x)_{+}=\max (x,0)$, which depends on $F$ only through the
unconditional variance $\sigma _{0}^{2}$ and the two tail parameters $%
(\omega ,\xi )$. Note that $\mathbb{E}[\Gamma _{i}^{-\xi }]=\Gamma (i-\xi
)/\Gamma (i)$ and $\mathbb{E}[\Gamma _{k}^{1-\xi }]=\Gamma (1+k-\xi )/\Gamma
(k)=(1-\xi )\sum_{i=1}^{k}\Gamma (i-\xi )/\Gamma (i)$, so the right-hand
side of (\ref{S_scndappr_intro}) is the sum of a mean-zero right skewed
random variable, and a (dependent) random-scale mean-zero normal variable.

Theorem 1 below provides an upper bound on the convergence rate of the error
in the approximation (\ref{S_scndappr_intro}). The proof combines the
Berry-Esseen bound for the central limit theorem approximation in (\ref%
{normal_intro}) and the rate result in Corollary 5.5.5 of \cite{Reiss89} for
the extreme value approximation in (\ref{EVT_intro}). If the tail of $F$ is
such that the approximation in (\ref{EVT_intro}) is accurate, then for both
fixed and diverging $k$ the error in (\ref{S_scndappr_intro}) converges to
zero faster than the error in the usual mean-zero normal approximation. The
approximation (\ref{S_scndappr_intro}) thus helps illuminate the nature and
origin of the leading error terms in the first order normal approximation,
as derived in Chapter 2 of \cite{Hall82}, for such $F$. We also provide a
characterization of the bound minimizing choice of $k$.

If $\xi >1/2$, then the distribution of $n^{-\xi }S_{n}$ converges to a
one-sided stable law with index $\xi $. An elegant argument by \cite%
{LePage81}\ shows that this limiting law can be written as $\omega
\sum_{i=1}^{\infty }\Gamma _{i}^{-\xi }$. The approximation (\ref%
{S_approx_intro}) thus remains potentially accurate under $k\rightarrow
\infty $ also for infinite variance distributions. To obtain a further
approximation akin to (\ref{S_scndappr_intro}), note that (\ref{Par_intro})
implies $\sigma ^{2}(\omega x)-\sigma ^{2}(\omega y)\approx (\omega ^{2}/\xi
)\int_{y}^{x}t^{1-1/\xi }dt$ for large $x,y$. Let $u_{n}=(n/k)^{\xi }$. Then%
\begin{equation}
S_{n}\overset{a}{\sim }-n^{\xi }\frac{\omega }{1-\xi }\Gamma _{k}^{1-\xi
}+n^{1/2}\left( \sigma ^{2}(\omega u_{n})+\frac{\omega ^{2}}{\xi }%
\int_{u_{n}}^{(n/\Gamma _{k})^{\xi }}y^{1-1/\xi }dy\right)
_{+}^{1/2}Z+n^{\xi }\omega \sum_{i=1}^{k}\Gamma _{i}^{-\xi }
\label{S_stable_intro}
\end{equation}%
which depends on $F$ only through the tail parameters $(\omega ,\xi )$ and
the sequence of truncated variances $\sigma ^{2}(\omega u_{n})$. The
approximation (\ref{S_stable_intro}) could also be applied to the case $\xi
<1/2$, so that one obtains a unifying approximation for values of $\xi $
both smaller and larger than $1/2.$ Indeed, for $F$ mean-centered Pareto of
index $\xi $, the results below imply that for suitable choice of $%
k\rightarrow \infty $, this approximation has an error that converges to
zero much faster than the error from the first order approximation via the
normal or non-normal stable limit for $\xi $ close to $1/2$. The approach
here thus also sheds light on the nature of the leading error terms of the
non-normal stable limit, such as those derived by \cite{Christoph92}.

For $\xi >1/2,$ the idea of splitting up $S_{n}$ as in (\ref{S_split}) and
to jointly analyze the asymptotic behavior of the pieces is already pursued
in \cite{Csorgo88}. The contribution here is to derive error rates for
resulting approximation to the distribution of the sum, especially for $%
1/3<\xi <1/2$, and to develop the additional approximation of the truncated
mean and variance induced by the approximate Pareto tail.

The next section formalizes these arguments and discusses various forms of
writing the variance term and the approximation for the case where both
tails are heavy. Section 3 contains the proofs.

\section{Assumptions and Main Results}

The following condition imposes the right tail of $F$ to be in the $\delta $%
-neighborhood of the Pareto distribution with index $\xi $, as defined in
Chapter 2 of \cite{Falk04}.

\begin{condition}
For some $x_{0},\delta ,\omega ,L_{F}>0$ and $1/3<\xi <1$, $F(x)$ admits a
density for all $x\geq x_{0}$ of the form%
\begin{equation*}
f(x)=(\omega \xi )^{-1}(x/\omega )^{-1/\xi -1}(1+h(x))
\end{equation*}%
with $|h(x)|\leq L_{F}x^{-\delta /\xi }$ uniformly in $x\geq x_{0}$.
\end{condition}

As discussed in \cite{Falk04}, Condition 1 can be motivated by considering
the remainder in the von Mises condition for extreme value theory. It is
also closely related to the assumption that the tail of $F$ is second order
regularly varying, as studied by \cite{DeHaan96regular} and \cite{deHaan96}.
Many heavy-tailed distributions satisfy Condition 1: for the right tail of a
student-t distribution with $\nu $ degrees of freedom, $\xi =1/\nu $ and $%
\delta =2\xi $, for the tail of a Fr\'{e}chet or generalized extreme value
distribution with parameter $\alpha $, $\xi =1/\alpha $ and $\delta =1,$ and
for an exact Pareto tail, $\delta $ may be chosen arbitrarily large. In
general, shifts of the distribution affect $\delta $; for instance, a
mean-centered Pareto distribution satisfies Condition 1 only for $\delta
\leq \xi $. See Remark 4 below.

We write $C$ for a generic positive constant that does not depend on $k$ or $%
n$, not necessarily the same in each instance it is used.

\begin{theorem}
Under Condition 1,

(a) for $1/3<\xi <1/2$ 
\begin{multline*}
\sup_{s}\left\vert \mathbb{P}(n^{-1/2}S_{n}\leq s)-\mathbb{P}\left( -n^{\xi }%
\frac{\omega }{1-\xi }\Gamma _{k}^{1-\xi }+n^{1/2}\left( \sigma _{0}^{2}-%
\frac{\omega ^{2}}{1-2\xi }(\Gamma _{k}/n)^{1-2\xi }\right)
_{+}^{1/2}Z\right. \right. \\
\left. \left. +n^{\xi }\omega \sum_{i=1}^{k}\Gamma _{i}^{-\xi }\leq
sn^{1/2}\right) \right\vert \leq C\cdot R(k,n,\xi ,\delta )
\end{multline*}

(b) for $1/3<\xi <1$, $u_{n}=(n/k)^{\xi }$ and $a_{n}=(n\log n)^{-1/2}$ for $%
\xi =1/2$ and $a_{n}=n^{-\max (\xi ,1/2)}$ otherwise,%
\begin{multline*}
\sup_{s}\left\vert \mathbb{P}(a_{n}S_{n}\leq s)-\mathbb{P}\left( -n^{\xi }%
\frac{\omega }{1-\xi }\Gamma _{k}^{1-\xi }+n^{1/2}\left( \sigma ^{2}(\omega
u_{n})+\frac{\omega ^{2}}{\xi }\int_{u_{n}}^{(n/\Gamma _{k})^{\xi
}}y^{1-1/\xi }dy\right) _{+}^{1/2}Z\right. \right. \\
\left. \left. +n^{\xi }\omega \sum_{i=1}^{k}\Gamma _{i}^{-\xi }\leq
s/a_{n}\right) \right\vert \leq C\cdot R(k,n,\xi ,\delta )
\end{multline*}

where%
\begin{equation*}
R(k,n,\xi ,\delta )=\left\{ 
\begin{tabular}{ll}
$n^{-1/2}(n/k)^{3\xi -1}+(k/n)^{\delta }k^{1/2}+k/n$ & $\text{for }1/3<\xi
<1/2$ \\ 
$k^{-\xi }+(k/n)^{\delta }k^{1/2}+k/n$ & $\text{for }1/2\leq \xi <1.$%
\end{tabular}%
\right.
\end{equation*}
\end{theorem}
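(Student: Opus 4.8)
The plan is to make the heuristic of the introduction rigorous through a chain of three distributional replacements, carried out conditionally on the vector $\mathcal{T}=(X_{n-k+1:n},\dots ,X_{n:n})$ of the $k$ largest order statistics. Recall that, conditionally on $\mathcal{T}$, the lower sum $\sum_{i=1}^{n-k}X_{i:n}$ is a sum of $n-k$ i.i.d.\ draws from $\tilde{F}_{T_n}$, independent of the (fixed) top part $C:=\sum_{i=1}^{k}X_{n-i+1:n}$. Writing $\hat S_n=\hat A+\hat B Z+\hat C$ for the random variable appearing inside $\mathbb{P}(\,\cdot\le sa_n^{-1})$ in the theorem, so that $\hat A=-n^{\xi}\tfrac{\omega}{1-\xi}\Gamma_k^{1-\xi}$, $\hat C=n^{\xi}\omega\sum_{i=1}^{k}\Gamma_i^{-\xi}$ and $\hat B$ is the Gaussian scale, the object to control is the Kolmogorov distance between $\mathcal{L}(S_n)$ and $\mathcal{L}(\hat S_n)$. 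I would bound it by the triangle inequality through the intermediate laws obtained by (i) replacing $\mathcal{L}\!\big(\sum_{i=1}^{n-k}X_{i:n}\mid\mathcal{T}\big)$ by $\mathcal{N}\!\big((n-k)\mu(T_n),(n-k)\sigma^{2}(T_n)\big)$; (ii) replacing $\mathcal{L}(\mathcal{T})$ by the law of the exponential-spacings model of (\ref{EVT_intro}), i.e.\ mapping $(X_{n-i+1:n})_{i\le k}$ to $(n^{\xi}\omega\Gamma_i^{-\xi})_{i\le k}$; and (iii) replacing $(n-k)\mu(T_n)$, $(n-k)^{1/2}\sigma(T_n)$ and $C$ by the explicit expressions in (\ref{S_scndappr_intro})/(\ref{S_stable_intro}).

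For step (i) I would use the \emph{non-uniform} Berry--Esseen bound: conditionally on $\mathcal{T}$ the pointwise error at $s$ is at most $C(n-k)^{-1/2}\rho(T_n)\sigma(T_n)^{-3}(1+|s^{*}|)^{-3}$, where $s^{*}=(sa_n^{-1}-C-(n-k)\mu(T_n))/((n-k)^{1/2}\sigma(T_n))$ and $\rho(t)$ is the third absolute central moment of $\tilde{F}_t$. Condition 1 gives $\rho(t)\asymp t^{3-1/\xi}$ (infinite iff $\xi>1/3$), while $\sigma^{2}(t)\to\sigma_0^{2}$ for $\xi<1/2$, $\sigma^{2}(t)\asymp\log t$ for $\xi=1/2$, and $\sigma^{2}(t)\asymp t^{2-1/\xi}$ for $\xi>1/2$; moreover $n(1-F(T_n))$ is, up to the $h$-remainder, $\mathrm{Gamma}(k,1)$-distributed, so $T_n\asymp(n/k)^{\xi}$ with light upper tail. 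For $\xi<1/2$ the Gaussian term dominates $\hat S_n$, $s^{*}$ is essentially deterministic given $\mathcal{T}$, the worst case is $s^{*}\approx0$, and $(n-k)^{-1/2}\rho(T_n)\sigma(T_n)^{-3}\asymp n^{-1/2}T_n^{3-1/\xi}$ at $T_n\asymp(n/k)^{\xi}$ yields the term $n^{-1/2}(n/k)^{3\xi-1}$. For $\xi\ge1/2$, after dividing by $a_n$ the Gaussian term has the smaller order $k^{1/2-\xi}$ relative to the ``stable core'' $a_n\big(C+(n-k)\mu(T_n)\big)$, which is $O_p(1)$ with a density bounded uniformly in $n$; hence for fixed $s$ the random argument $s^{*}$ is spread by the core over a range $\asymp k^{\xi-1/2}$, so $\mathbb{E}_{\mathcal{T}}[(1+|s^{*}|)^{-3}]\lesssim k^{1/2-\xi}$ by a change of variables against the core's density, and since $(n-k)^{-1/2}\rho(T_n)\sigma(T_n)^{-3}\asymp n^{-1/2}T_n^{1/(2\xi)}$ is of order $k^{-1/2}$ in mean, the net contribution is $k^{-1/2}\cdot k^{1/2-\xi}=k^{-\xi}$. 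Throughout one first removes the events $\{T_n<x_0\}$ and $\{\Gamma_k\notin[ck,c^{-1}k]\}$, which have probability $o(R)$ by binomial/Poisson and Gamma tail bounds.

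Step (ii) is where Corollary 5.5.5 of \cite{Reiss89} enters: since $F$ lies in the $\delta$-neighborhood of the Pareto law, that result bounds the variational distance between $\mathcal{L}\big((X_{n-i+1:n})_{i\le k}\big)$ rescaled by $n^{\xi}$ and $\mathcal{L}\big((\omega\Gamma_i^{-\xi})_{i\le k}\big)$ by a constant times $(k/n)^{\delta}k^{1/2}+k/n$; as $C$, $(n-k)\mu(T_n)$ and $(n-k)^{1/2}\sigma(T_n)$ are measurable functions of $\mathcal{T}$, replacing $\mathcal{L}(\mathcal{T})$ costs at most this distance and supplies the last two terms of $R$. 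Step (iii) is then a deterministic expansion: on the coupling event one has $T_n=n^{\xi}\omega\Gamma_k^{-\xi}$ and $C=n^{\xi}\omega\sum_{i=1}^{k}\Gamma_i^{-\xi}$ exactly when $h\equiv0$ and $T_n\ge x_0$, and up to $h$-remainders otherwise; Condition 1 with $\mathbb{E}[X_1]=0$ gives $\mu(t)=-\tfrac{\omega^{1/\xi}t^{1-1/\xi}}{(1-\xi)(1-(t/\omega)^{-1/\xi})}\big(1+O(t^{-\delta/\xi})\big)$ and the stated expansion of $\sigma^{2}(t)$, and substituting $t=n^{\xi}\omega\Gamma_k^{-\xi}$ with $(n-k)/(n-\Gamma_k)\overset{a}{\sim}1$ produces the closed forms of (\ref{S_scndappr_intro})/(\ref{S_stable_intro}). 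After the leading cancellation between the $\Gamma_k^{1-\xi}$ term and $\sum_{i=1}^{k}\Gamma_i^{-\xi}$, the residual $(n-k)\mu(T_n)+C-\hat A-\hat C$ collapses to $\hat A(\Gamma_k-k)/\big(n(1-\Gamma_k/n)\big)$ plus $h$-terms, and similarly $(n-k)^{1/2}\sigma(T_n)=\hat B\big(1+O(k/n)\big)$; after rescaling by $a_n$ and bounding through the density of $\hat S_n$ these contribute only $O(k/n)$ and $O((k/n)^{\delta}k^{1/2})$. Collecting (i)--(iii) gives $C\cdot R(k,n,\xi,\delta)$, and the bound-minimizing $k$ follows by differentiating $R$ in the two parameter ranges.

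The main obstacle is step (i) for $\xi\ge1/2$: the spreading argument needs a density bound for the stable core that is uniform in $n$ and survives conditioning on $\Gamma_k$ (and on the part of $\mathcal{T}$ entering $\sigma(T_n)$), which is where the assumed accuracy of (\ref{EVT_intro}) and the weak convergence of $a_nS_n$ to its (normal or one-sided stable) limit have to be used quantitatively, together with handling the band of $s$ for which the core and the Gaussian term are of the same order. A secondary difficulty is the bookkeeping in step (iii): the finite-$n$ corrections must be shown to be multiplied by the post-cancellation scale $\asymp n^{\xi}$ rather than by the much larger common scale $n^{\xi}\Gamma_k^{1-\xi}$ of the two nearly-cancelling terms, which is exactly what the identity $\mathbb{E}[\Gamma_k^{1-\xi}]=(1-\xi)\sum_{i=1}^{k}\mathbb{E}[\Gamma_i^{-\xi}]$ is there to make possible.
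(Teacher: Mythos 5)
Your steps (ii) and (iii), and your treatment of $1/3<\xi<1/2$ (and $\xi=1/2$) in step (i), essentially reproduce the paper's argument: uniform Berry--Esseen conditionally on the top order statistics with $\mathbb{E}[\tilde T_n^{3-1/\xi}]\lesssim (n/k)^{3\xi-1}$, Reiss's Corollary 5.5.5 for the variational-distance cost $(k/n)^{\delta}k^{1/2}+k/n$, and exact Taylor expansions of $\mu$ and $\sigma^2$ with the $\psi_n\to 1$ replacement handled by Cauchy--Schwarz. The problem is the case $1/2<\xi<1$, which is exactly where the $k^{-\xi}$ term must come from. Your route --- a non-uniform Berry--Esseen bound plus a ``spreading'' argument requiring a density bound for the stable core $a_n\bigl(C+(n-k)\mu(T_n)\bigr)$, uniform in $n$ and valid conditionally on $\Gamma_k$ and on the quantities entering $\sigma(T_n)$ --- is not carried out; you yourself flag it as the main obstacle. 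That is a genuine gap, not a routine verification: the core's conditional law given $\Gamma_k$ is the law of $\omega\sum_{i=1}^k\Gamma_i^{-\xi}$ given its last point (plus a deterministic function of $\Gamma_k$), and a uniform-in-$k$ density bound for it is precisely the kind of anti-concentration statement that needs its own proof; moreover your final multiplication of $\mathbb{E}[(1+|s^*|)^{-3}]\lesssim k^{1/2-\xi}$ with the mean $k^{-1/2}$ of the Berry--Esseen factor ignores the dependence between the two (both are functions of $\mathcal{T}$).

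The paper avoids this entirely by a different smoothing device. First, it uses an $L^1$ (Wasserstein) Berry--Esseen bound (Goldstein, 2010): $\Vert\Phi_{n,t}-\Phi\Vert_1\leq (n-k)^{-1/2}\int|y|^3dF_t(y)/\sigma(t)^3\leq Cn^{-1/2}t^{1/(2\xi)}$, which is $\leq Ck^{-1/2}$ on the event $A_n$. Second, the anti-concentration is supplied not by the whole core but by a single summand: since $\Gamma_1/\Gamma_2,\ldots,\Gamma_{k-1}/\Gamma_k,\Gamma_k$ are independent, $(\Gamma_1/\Gamma_2)^{-\xi}$ is, conditionally on $\Gamma_2,\ldots,\Gamma_k$, exactly Pareto with bounded density $g$; writing $H_n^2(s)=\mathbb{E}\,G(\cdot/(\omega\Gamma_2^{-\xi}))$ and integrating by parts turns the replacement of $\Phi_{n,\tau_n}^{-1}(U)$ by $Z$ into a cost $a\sup_y|g(y)|\,\Vert\Phi_{n,\tau_n}-\Phi\Vert_1$ with $a=n^{-\xi}(n-k)^{1/2}\sigma(\tau_n)/(\omega\Gamma_2^{-\xi})\asymp k^{1/2-\xi}\Gamma_2^{\xi}$, giving $k^{1/2-\xi}\cdot k^{-1/2}=k^{-\xi}$ after taking expectations. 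If you want to complete your own route you would have to prove the uniform conditional density bound for the core (and handle the dependence issue above); the paper's conditional-Pareto trick is the missing idea that makes the $1/2<\xi<1$ case go through cleanly. A small additional remark: the identity $\mathbb{E}[\Gamma_k^{1-\xi}]=(1-\xi)\sum_{i\leq k}\mathbb{E}[\Gamma_i^{-\xi}]$ plays no role in the proof's cancellation bookkeeping; the near-cancellation is handled pathwise through $\psi_n$ and Cauchy--Schwarz, not in expectation.
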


It is straightforward to characterize the rate for $k$ which minimizes the
bound $R(k,n,\xi ,\delta )$. For two positive sequences $a_{n},b_{n}$, write 
$a_{n}\asymp b_{n}$ if $0<\lim \inf a_{n}/b_{n}\leq \lim \sup_{n\rightarrow
\infty }b_{n}/a_{n}<\infty $.

\begin{lemma}
Let $k^{\ast }\asymp n^{\alpha ^{\ast }}$ with%
\begin{equation*}
\alpha ^{\ast }=\left\{ 
\begin{tabular}{ll}
$\left( \min \left( \frac{6\xi -1}{6\xi },\frac{6\xi +2\delta -3}{6\xi
+2\delta -1}\right) \right) _{+}$ & $\text{for }1/3<\xi <1/2$ \\ 
$\min \left( \frac{2\delta }{1+2(\delta +\xi )},\frac{1}{1+\xi }\right) $ & $%
\text{for }1/2\leq \xi <1.$%
\end{tabular}%
\right.
\end{equation*}%
Then $\min_{k\geq 1}R(k,n,\xi ,\delta )\asymp R(k^{\ast },n,\xi ,\delta
)\asymp n^{\beta ^{\ast }}$ with 
\begin{equation*}
\beta ^{\ast }=\left\{ 
\begin{tabular}{ll}
$-\delta $ & $\text{for }\delta \leq 3(1/2-\xi )$ \\ 
$-\frac{3+2\delta -6\xi }{12\xi +4\delta -2}$ & $\text{for }3(1/2-\xi
)<\delta \leq 1/2+3\xi $ \\ 
$-\frac{1}{6\xi }$ & $\text{for }1/2+3\xi <\delta $%
\end{tabular}%
\right.
\end{equation*}%
for $1/3<\xi <1/2$, and $\beta ^{\ast }=-\xi \alpha ^{\ast }$ for $1/2\leq
\xi <1$.
\end{lemma}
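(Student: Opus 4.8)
The plan is to treat the Lemma as a pure calculus exercise: minimize the piecewise function $R(k,n,\xi,\delta)$ over $k\geq 1$ by working on the logarithmic scale, writing $k\asymp n^\alpha$ for $\alpha\in[0,1)$ and finding, for each of the two regimes of $\xi$, the exponent $\alpha^\ast$ that minimizes the maximum of the relevant monomial exponents. First I would record, for $k\asymp n^\alpha$, the exponent of each of the three summands of $R$. For $1/3<\xi<1/2$ these are: $n^{-1/2}(n/k)^{3\xi-1}$ has exponent $-1/2+(3\xi-1)(1-\alpha)$; $(k/n)^\delta k^{1/2}$ has exponent $\delta(\alpha-1)+\alpha/2$; and $k/n$ has exponent $\alpha-1$. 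For $1/2\leq\xi<1$ the first term $k^{-\xi}$ has exponent $-\xi\alpha$, while the second and third are unchanged. The value of $R$ on this scale is then $n$ to the max of these exponents, and since $R$ is, up to constants, a sum of power functions of $k$, its minimum over integers $k\geq 1$ is achieved (in the $\asymp$ sense) at whichever $k\asymp n^{\alpha^\ast}$ makes this maximum smallest; I would note explicitly that because each term is monotone in $k$ on $[1,n]$ — the first decreasing, the second and third increasing — the minimizer balances the decreasing term against the larger of the two increasing terms.

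Next I would carry out the balancing. The two increasing terms $(k/n)^\delta k^{1/2}$ and $k/n$ cross at $k\asymp n$ when $\delta\geq 1/2$ and otherwise the $(k/n)^\delta k^{1/2}$ term dominates for $k$ up to $n$; in all cases for $k=n^\alpha$ with $\alpha<1$ the binding increasing term is $(k/n)^\delta k^{1/2}$ unless $\delta$ is so large that even at the optimum $k/n$ matters, which one checks does not happen in the stated ranges because $\alpha^\ast<1$. So the candidate interior optimum equates the exponent of the decreasing term with that of $(k/n)^\delta k^{1/2}$. For $1/2\leq\xi<1$: set $-\xi\alpha=\delta(\alpha-1)+\alpha/2$, giving $\alpha=2\delta/(1+2(\delta+\xi))$; one must also respect $\alpha\leq 1$ and, more importantly, compare with the constraint coming from the requirement $k\geq 1$, i.e. $\alpha\geq 0$, which is automatic, and a second candidate where instead $k^{-\xi}$ is balanced against $k/n$, giving $\alpha=1/(1+\xi)$; the true $\alpha^\ast$ is the minimum of these, which is exactly the stated formula, and then $\beta^\ast=-\xi\alpha^\ast$ is obtained by plugging back into $k^{-\xi}$. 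For $1/3<\xi<1/2$: balancing $-1/2+(3\xi-1)(1-\alpha)$ against $\delta(\alpha-1)+\alpha/2$ yields $\alpha=(6\xi+2\delta-3)/(6\xi+2\delta-1)$, and balancing it instead against $\alpha-1$ yields $\alpha=(6\xi-1)/(6\xi)$; again $\alpha^\ast$ is the minimum of these, truncated at $0$ via $(\cdot)_+$ since $R$ is minimized at $k\asymp 1$ when the "decreasing" term is in fact not present (when $3\xi-1\leq 0$ is impossible here, but the $(\cdot)_+$ handles the boundary case $\delta$ large and $\xi$ near $1/3$ where the formal optimum is negative). Substituting $\alpha^\ast$ back into the binding term gives $\beta^\ast$; the three sub-cases for $\beta^\ast$ (namely $\delta\leq 3(1/2-\xi)$, the middle range, and $1/2+3\xi<\delta$) correspond precisely to which of the two candidate formulas for $\alpha^\ast$ is active, respectively the boundary $\alpha^\ast=0$ giving $R\asymp n^{-\delta}$ from the term $(k/n)^\delta k^{1/2}$ at $k\asymp 1$, then the genuine interior balance, then the regime where the $(6\xi-1)/(6\xi)$ branch binds and $\beta^\ast=-1/(6\xi)$ from the first term.

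Finally I would assemble the pieces: verify in each case that at $k^\ast\asymp n^{\alpha^\ast}$ the omitted (non-binding) term is indeed of smaller or equal order, so that $R(k^\ast,n,\xi,\delta)\asymp n^{\beta^\ast}$, and that for any other $k$ one of the monomials is at least of order $n^{\beta^\ast}$, giving the matching lower bound $\min_k R\asymp n^{\beta^\ast}$. A minor point to handle carefully is the integer constraint and the endpoints $k=1$ and $k$ near $n$: since all exponents are continuous and the function is a finite sum of powers, rounding $n^{\alpha^\ast}$ to an integer changes $R$ only by a constant factor, which is absorbed into the $\asymp$. The main obstacle — really the only nontrivial part — is the bookkeeping of which pair of terms binds in which parameter region, i.e. correctly identifying the breakpoints $\delta=3(1/2-\xi)$ and $\delta=1/2+3\xi$ for the first case and $\delta$ vs.\ the crossover giving $1/(1+\xi)$ in the second, and checking that the third term $k/n$ never binds at the optimum in the first regime (it only enters through the $\alpha=(6\xi-1)/(6\xi)$ branch via its comparison with the first term). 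Everything else is substitution and simplification, so I would present it as a short case analysis with the algebra left to the reader.
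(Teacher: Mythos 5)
Your approach is the right one, and in fact it is the only proof the paper has in mind: the paper gives no argument for this lemma beyond calling the minimization of $R(k,n,\xi,\delta)$ straightforward, and your log-scale balancing ($k\asymp n^{\alpha}$, intersect the decreasing exponent with the maximum of the two increasing exponents, i.e.\ take the minimum of the pairwise balance points, then truncate at $0$) reproduces exactly the stated $\alpha^{\ast}$, $\beta^{\ast}$ and the breakpoints $\delta=3(1/2-\xi)$ and $\delta=1/2+3\xi$. Two intermediate assertions should be corrected, though neither affects the final answer because your "minimum of the two candidates plus final dominance check" rule supersedes them. First, it is not true that the binding increasing term is always $(k/n)^{\delta}k^{1/2}$: for $\delta$ large the term $k/n$ is the one that binds at the optimum --- that is precisely the origin of the branches $\alpha^{\ast}=\tfrac{6\xi-1}{6\xi}$ (for $1/3<\xi<1/2$, $\delta>1/2+3\xi$) and $\alpha^{\ast}=\tfrac{1}{1+\xi}$ (for $1/2\leq\xi<1$); relatedly, the two increasing terms cross at $k\asymp n^{(\delta-1)/(\delta-1/2)}$ with exponent in $(0,1)$ when $\delta>1$, not at $k\asymp n$. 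Second, the parenthetical about the $(\cdot)_{+}$ truncation is backwards: the formal balance point $\tfrac{6\xi+2\delta-3}{6\xi+2\delta-1}$ is negative when $\delta\leq 3(1/2-\xi)$, i.e.\ for $\delta$ \emph{small} (with $\xi$ near $1/3$ enlarging that range), which is exactly the regime where $k^{\ast}\asymp 1$ and $\beta^{\ast}=-\delta$ from the $(k/n)^{\delta}k^{1/2}$ term --- as you correctly state elsewhere. With those two sentences fixed and the deferred algebra (dominance of the omitted term at $\alpha^{\ast}$ and the lower bound for all other $k$) written out, the proof is complete.
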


\begin{figure}[tbp] 
\begin{center}%
\caption{Error Convergence Rates $n^\beta$ of Refined Approximation}

\bigskip

\includegraphics[scale=0.70]{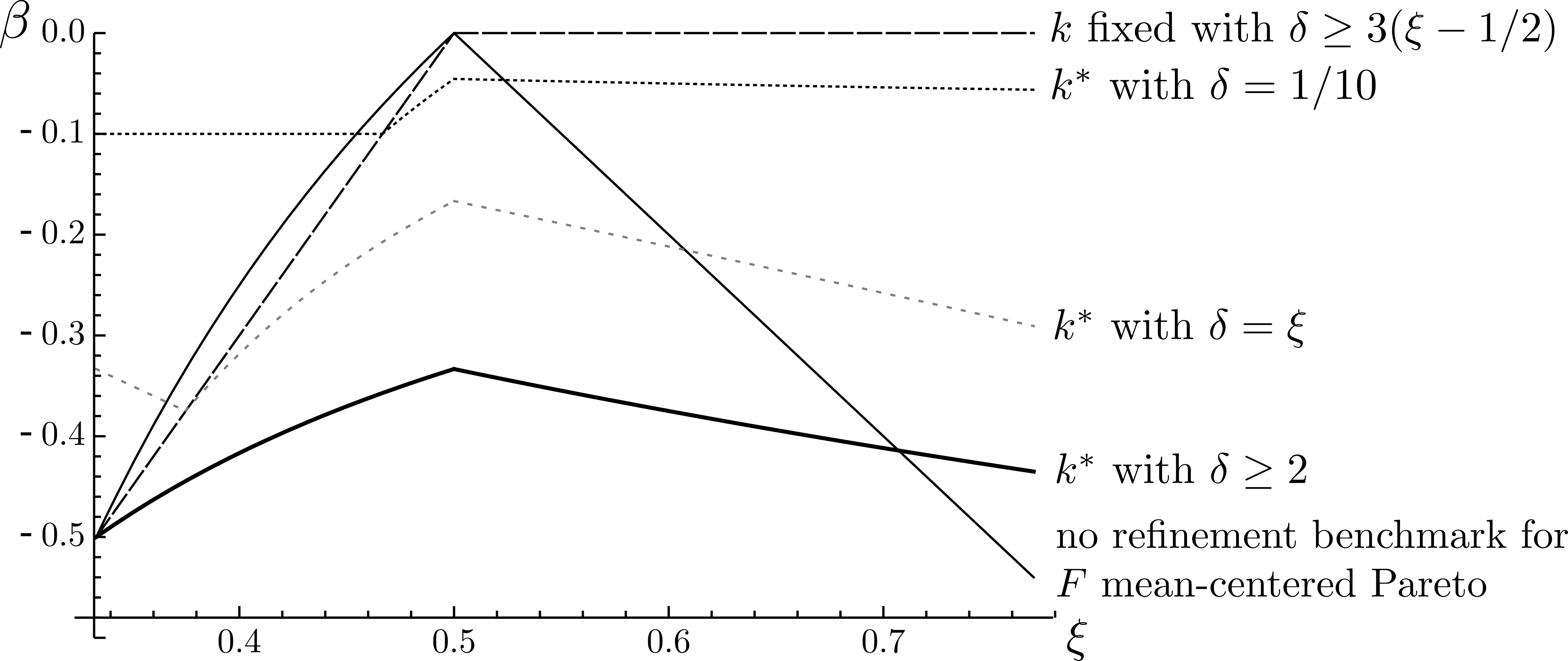}%
%
\end{center}

\begin{small}%
\end{small}%
\end{figure}%
\emph{Remarks.}

1. For $1/3<\xi <1/2$, \cite{Hall79} shows that under Condition 1, the error
in the usual normal approximation to the distribution of $S_{n}$ satisfies $%
\sup_{s}|\mathbb{P}(n^{-1/2}S_{n}\leq s)-\mathbb{P}(\sigma _{0}Z\leq
s)|\asymp n^{1-1/(2\xi )}$, so convergence is very slow for $\xi $ close to $%
1/2$. For $\xi =1/2$, Theorems 3 and 4 in \cite{Hall80b} imply that under
Condition 1, $(n\log n)^{-1/2}S_{n}$ converges to a normal distribution at a
logarithmic rate. For any $\delta >0$, the new approximation with optimal
choice of $k^{\ast }$ yields a better rate $n^{\beta ^{\ast }}$ for $\xi $
sufficiently close to $1/2$, and for sufficiently large $\delta $, the rate
is at least as fast as $n^{-1/3}$ for all $1/3<\xi \leq 1/2$. Thus, if the
tail of $F$ is sufficiently close to being Pareto in the sense of Condition
1, then the new approximations can provide dramatic improvements over the
normal approximation. Even keeping $k$ fixed improves over the benchmark
rate $n^{1-1/(2\xi )}$ as long as $\delta >1/(2\xi )-1$ for $1/3<\xi <1/2$.
At the same time, if $\delta <1/2$, then $\beta ^{\ast }$ is larger than $%
1-1/(2\xi )$ for some $\xi $ sufficiently close to $1/3$, so the new
approximation is potentially worse than the usual normal approximation (or,
equivalently, the optimal choice of $k$ then is $k^{\ast }=0$).

For $1/2<\xi <1$ and under Condition 1, $\sup_{s}|\mathbb{P}(n^{-\xi
}S_{n}\leq s)-\mathbb{P}(\omega \sum_{i=1}^{\infty }\Gamma _{i}^{-\xi }\leq
s)|=O(n^{1-2\xi }+n^{-\delta })$ by Theorem 1 of \cite{Hall81}, and his
Theorem 2 shows this rate to be sharp under a suitably strengthened version
of Condition 1. More specifically, for $F$ mean-centered Pareto, the rate is
exactly $n^{1-2\xi }$ (cf. \cite{Christoph92}, Example 4.25), which, for any 
$\delta >0$, is slower than $n^{\beta ^{\ast }}$ for $\xi $ sufficiently
close to $1/2$.

Figure 1 plots some of these rates.

2. An alternative approximation is obtained by replacing the term in the
positive part function in parts (a) and (b) of Theorem 1 by $\sigma
^{2}(\omega (n/\Gamma _{k})^{\xi })$, with an approximation error that is
still bounded by $C\cdot R(k,n,\xi ,\delta ).$ Substitution of the term $%
\sigma _{0}^{2}-\frac{\omega ^{2}}{1-2\xi }(\Gamma _{k}/n)^{1-2\xi }$ in
part (a) of Theorem 1 by $\sigma _{0}^{2}-\frac{\omega ^{2}}{1-2\xi }%
(k/n)^{1-2\xi }$ (or dropping the integral in part (b) for $1/3<\xi <1/2$)
induces an additional error of order $(k/n)^{1-2\xi }k^{-1/2}$. In general,
this worsens the bound, although even with this further approximation, the
rate can still be better than the baseline rate of $n^{1-1/(2\xi )}$. For $%
1/2<\xi <1$, dropping the integral in part (b) induces an additional error
of order $k^{-\xi }$, so this simpler approximation still has an error no
larger than $C\cdot R(k,n,\xi ,\delta )$.

3. Consider the case where both tails of $F$ are approximately Pareto, that
is Condition 1 holds for $\xi =\xi _{R}$ and $\delta =\delta _{R}$, and for
some $x_{L},\omega _{L},\delta _{L},L_{L}>0,$ for all $x<-x_{L}$, $%
f(x)=(\omega _{L}\xi _{L})^{-1}(-x/\omega _{L})^{-1/\xi _{L}-1}(1+h_{L}(-x))$
with $|h_{L}(x)|\leq L_{L}x^{-\delta _{L}/\xi _{L}}$ for all $x>x_{L}$.
Proceeding as in the introduction then suggests 
\begin{multline*}
S_{n}\overset{a}{\sim }n^{\xi _{L}}\frac{\omega _{L}}{1-\xi _{L}}\Upsilon
_{k_{L}}^{1-\xi _{L}}-n^{\xi _{R}}\frac{\omega _{R}}{1-\xi _{R}}\Gamma
_{k_{R}}^{1-\xi _{R}}+n^{1/2}\sigma (\omega _{L}(n/\Upsilon _{k_{L}})^{\xi
_{L}},\omega _{R}(n/\Gamma _{k_{R}})^{\xi _{R}})Z \\
+n^{\xi }\omega _{R}\sum_{i=1}^{k_{R}}\Gamma _{i}^{-\xi _{R}}-n^{\xi
_{L}}\omega _{L}\sum_{i=1}^{k_{L}}\Upsilon _{i}^{-\xi _{L}}
\end{multline*}%
with $(\Upsilon _{i})_{i=1}^{\infty }$ an independent copy of $(\Gamma
_{i})_{i=1}^{\infty }$ and $\sigma ^{2}(x,y)$ the variance of $X_{1}$
conditional on $-x\leq X_{1}\leq y$. If $1/3<\xi _{L},\xi _{R}<1$, then
arguments analogous to the proof of Theorem 1 show that the error of this
approximation is bounded by an expression of the form $C\cdot R(k_{R},n,\xi
_{R},\delta _{R})+C\cdot R(k_{L},n,\xi _{L},\delta _{L})$, and the same form
is obtained by replacing $\sigma ^{2}(x,y)$ with $\sigma ^{2}(\omega
_{L}x,\omega _{R}y)=(\sigma ^{2}(\omega _{L}v_{n},\omega _{R}u_{n})+(\omega
_{L}^{2}/\xi _{L})\int_{v_{n}}^{x}t^{1-1/\xi _{L}}dt+(\omega _{R}^{2}/\xi
_{R})\int_{u_{n}}^{y}t^{1-1/\xi _{R}}dt)_{+}$ for $v_{n}=(n/k_{L})^{\xi
_{L}} $ and $u_{n}=(n/k_{R})^{\xi _{R}}$ (and the integrals may be dropped
for $1/2<\xi <1$, see the preceding remark). If $\bar{\xi}=\max (\xi
_{L},\xi _{R})>1/2$ and $\xi _{L}\neq \xi _{R}$, then the first order
approximation to the distribution of $n^{-\bar{\xi}}S_{n}$ is a one-sided
stable law that does not depend on the smaller tail index. In contrast, the
approximation above reflects the impact of both heavy tails, and in general,
ignoring the relatively lighter tail leads to a worse bound.

4. Suppose the right tail of $F$ is well approximated by a shifted Pareto
distribution, that is for some $\kappa \in 
\mathbb{R}
$ and $x_{1},\delta _{1},L_{1}>0$, $dF(x)/dx=f(x)=(\omega \xi
)^{-1}((x-\kappa )/\omega )^{-1/\xi -1}(1+h(x-\kappa ))$ for all $%
x>x_{1}+\kappa $ with $|h(y)|\leq L_{1}y^{-\delta _{1}/\xi }$ uniformly in $%
y\geq x_{1}$. This implies that $F$ satisfies Condition 1, but only for $%
\delta =\min (\xi ,\delta _{1})$. Let $F_{0}(x)=F(x+\kappa )$ and $\mu
_{0}(x)=-\int_{x}^{\infty }ydF_{0}(y)/F_{0}(x)$. Then $\mu (x+\kappa
)=-\int_{x+\kappa }^{\infty }ydF(y)/F(x+\kappa )=[\mu _{0}(x)-\kappa
(1-F_{0}(x))]/F_{0}(x)$. Thus, proceeding as for (\ref{S_scndappr_intro})
yields $(X_{n-i+1:n})_{i=1}^{k}\overset{a}{\sim }(\kappa +n^{\xi }\omega
\Gamma _{i}^{-\xi })_{i=1}^{k}$ and%
\begin{equation}
S_{n}\overset{a}{\sim }\kappa (k-\Gamma _{k})-n^{\xi }\frac{\omega }{1-\xi }%
\Gamma _{k}^{1-\xi }+n^{1/2}\sigma (\omega (n/\Gamma _{k})^{\xi }+\kappa
)Z+n^{\xi }\omega \sum_{i=1}^{k}\Gamma _{i}^{-\xi }.  \label{S_thrdappr}
\end{equation}%
Straightforward modifications of the proof of Theorem 1 show that the
approximation error in (\ref{S_thrdappr}) is bounded by $C\cdot R(k,n,\xi
,\delta _{1}),$ and this form for the bound also applies if $\sigma
^{2}(\omega x+\kappa )$ is further approximated by $\sigma ^{2}(\omega
x+\kappa )\approx (\sigma ^{2}(\omega u_{n})+(\omega ^{2}/\xi
)\int_{u_{n}}^{x}y^{1-1/\xi }dy)_{+}$ for $u_{n}=(n/k)^{\xi }$. So, for
instance, if $F$ is mean-centered Pareto with $1/3<\xi <1$, then $\delta
_{1} $ may be chosen arbitrarily large, and the approximation (\ref%
{S_thrdappr}) with $k=k^{\ast }$ of Lemma 1 yields a substantially better
bound on the convergence rate compared to the original approximation (\ref%
{S_stable_intro}) with a bound of the form $C\cdot R(k,n,\xi ,\xi )$. The
cost of this further refinement, however, is the introduction of a tail
location parameter $\kappa $ in addition to the tail scale and tail shape
parameters $(\omega ,\xi )$.

\section{Proofs}

Let $X_{n}^{e}=(X_{n-k+1:n},X_{n-k:n},\ldots ,X_{n:n}).$ The proof of
Theorem 1 relies heavily on Corollary 5.5.5 of \cite{Reiss89} (also see
Theorem 2.2.4 of \cite{Falk04}), which implies that under Condition 1, 
\begin{equation}
\sup_{B^{k}}|\mathbb{P}(n^{-\xi }\omega ^{-1}X_{n}^{e}\in B^{k})-\mathbb{P}%
((\Gamma _{k}^{-\xi },\Gamma _{k-1}^{-\xi },\ldots ,\Gamma _{1}^{-\xi })\in
B^{k})|\leq C((k/n)^{\delta }k^{1/2}+k/n)  \label{Reiss555}
\end{equation}%
where the supremum is over Borel sets $B^{k}$ in $%
\mathbb{R}
^{k}$.

Without loss of generality, assume $1-(x_{0}/\omega )^{-1/\xi }>0$, $\sigma
_{0}^{2}-\omega ^{1/\xi }x_{0}^{1-2\xi }/(1-2\xi )>0$ and $x_{0}>e$. We
first prove two elementary lemmas. Let $L$ denote a generic positive
constant that does not depend on $x$ or $y$, not necessarily the same in
each instant it is used.

\begin{lemma}
\label{lm:con1}Under Condition 1, for all $x,y\geq x_{0}$,

(a) for $1/3<\xi <1$, $|\mu (x)|\leq Lx^{1-1/\xi }$ and $\left\vert \mu (x)+%
\frac{\omega ^{1/\xi }}{1-\xi }\frac{x^{1-1/\xi }}{1-(x/\omega )^{-1/\xi }}%
\right\vert \leq Lx^{1-(1+\delta )/\xi }$

(b) for $1/3<\xi <1/2$, $|\sigma ^{2}(x)-\sigma _{0}^{2}+\frac{\omega
^{1/\xi }}{1-2\xi }x^{2-1/\xi }|\leq L(x^{2-(1+\delta )/\xi }+x^{2-2/\xi
}+x^{-1/\xi })$

(c) for $1/2<\xi <1$, $L^{-1}x^{2-1/\xi }\leq \sigma ^{2}(x)\leq Lx^{2-1/\xi
}$ and $|\sigma ^{2}(x)-\sigma ^{2}(y)+(\omega ^{1/\xi }/\xi
)\int_{x}^{y}u^{1-1/\xi }du|\leq L(|\int_{x}^{y}u^{1-(1+\delta )/\xi
}du|+(x^{2-1/\xi }+y^{2-1/\xi })(x^{-1/\xi }+y^{-1/\xi }))$

(d) for $\xi =1/2$, $L^{-1}\log x\leq \sigma ^{2}(x)\leq L\log x$, and $%
|\sigma ^{2}(x)-\sigma ^{2}(y)+(\omega ^{1/\xi }/\xi )\int_{x}^{y}u^{1-1/\xi
}du|\leq L(|\int_{x}^{y}u^{1-(1+\delta )/\xi }du|+x^{-1/\xi }\log
y+y^{-1/\xi }\log x)$

(e) for $1/3<\xi <1$, $\int |y|^{3}d\tilde{F}_{x}(y)\leq Lx^{3-1/\xi }$.
\end{lemma}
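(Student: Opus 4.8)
The plan is to reduce every claim to an estimate of an incomplete moment $M_j(x)=\int_x^\infty y^j\,dF(y)$, or a difference of such moments, using the exact form of the density $f(y)=(\omega\xi)^{-1}(y/\omega)^{-1/\xi-1}(1+h(y))$ valid for $y\ge x_0$, together with the bound $|h(y)|\le L_F y^{-\delta/\xi}$. Writing $1-F(x)=M_0(x)$ and integrating the density termwise gives, for $x\ge x_0$,
\[
1-F(x)=(x/\omega)^{-1/\xi}+r_0(x),\qquad |r_0(x)|\le L\,x^{-(1+\delta)/\xi},
\]
and similarly $\int_x^\infty y\,dF(y)=\frac{\omega^{1/\xi}}{1/\xi-1}x^{1-1/\xi}+r_1(x)$ with $|r_1(x)|\le L\,x^{1-(1+\delta)/\xi}$ whenever $\xi<1$ (so the leading integral converges), and $\int_x^\infty y^2\,dF(y)=\frac{\omega^{1/\xi}}{1/\xi-2}x^{2-1/\xi}+r_2(x)$ with $|r_2(x)|\le L\,x^{2-(1+\delta)/\xi}$ when $\xi<1/2$. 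For $\xi\ge 1/2$ the second moment from $x$ to $\infty$ diverges, so instead I would keep it as a difference $\int_x^y u^2\,dF(u)$ over a finite range $[x,y]$, again splitting the density into its Pareto part plus the $h$-correction; the Pareto part contributes $(\omega^{1/\xi}/\xi)\int_x^y u^{1-1/\xi}\,du$ and the correction is bounded by $L\int_x^y u^{1-(1+\delta)/\xi}\,du$. The case $\xi=1/2$ is the borderline where $\int u^{1-1/\xi}du=\int u^{-1}du=\log$ appears, explaining the logarithmic terms in (d).

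From these incomplete-moment estimates the lemma follows by routine algebra. For (a), $\mu(x)=-M_1(x)/F(x)$ since $\mathbb E[X_1]=0$ forces $\int_{-\infty}^x y\,dF(y)=-M_1(x)$; dividing the expansion of $M_1$ by $F(x)=1-M_0(x)$ and expanding $1/F(x)=1/(1-(x/\omega)^{-1/\xi}-r_0(x))$ gives the claimed main term $-\frac{\omega^{1/\xi}}{1-\xi}\frac{x^{1-1/\xi}}{1-(x/\omega)^{-1/\xi}}$ and a remainder of order $x^{1-(1+\delta)/\xi}$; the cruder bound $|\mu(x)|\le Lx^{1-1/\xi}$ is immediate. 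For (b), $\sigma^2(x)=\mathbb E[X_1^2\mathbf 1\{X_1\le x\}]/F(x)-\mu(x)^2$, and $\mathbb E[X_1^2\mathbf 1\{X_1\le x\}]=\sigma_0^2-M_2(x)$; substituting the expansion of $M_2$, dividing by $F(x)$, and noting $\mu(x)^2=O(x^{2-2/\xi})$ produces the stated error $L(x^{2-(1+\delta)/\xi}+x^{2-2/\xi}+x^{-1/\xi})$, the last term coming from expanding $1/F(x)$ against the bounded constant $\sigma_0^2$. For (c) and (d), the two-sided bounds $L^{-1}x^{2-1/\xi}\le\sigma^2(x)\le Lx^{2-1/\xi}$ (resp.\ $L^{-1}\log x\le\sigma^2(x)\le L\log x$) follow because the dominant contribution to $\mathbb E[X_1^2\mathbf 1\{X_1\le x\}]$ is $\int_{x_0}^x u^2\,dF(u)\asymp\int_{x_0}^x u^{1-1/\xi}du$, which is $\asymp x^{2-1/\xi}$ for $\xi>1/2$ and $\asymp\log x$ for $\xi=1/2$; the difference formula $\sigma^2(x)-\sigma^2(y)$ is handled by writing $\sigma^2(x)F(x)-\sigma^2(y)F(y)$ as a telescoping combination of $\int_x^y u^2dF(u)$, $M_0$-differences, and $\mu^2$-differences, then bounding each against the corresponding error term — here the factor $(x^{2-1/\xi}+y^{2-1/\xi})(x^{-1/\xi}+y^{-1/\xi})$ arises from the cross terms when one clears the denominators $F(x),F(y)$. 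Finally (e) is the crudest: $\int|y|^3d\tilde F_x(y)\le F(x)^{-1}(\int_{-\infty}^0|y|^3dF(y)+\int_{x_0}^x y^3\,dF(y))$, and $\int_{x_0}^x y^3dF(y)\asymp\int_{x_0}^x y^{2-1/\xi}du\asymp x^{3-1/\xi}$ since $3-1/\xi>0$ for $\xi>1/3$, while the left-tail integral is finite and hence absorbed.

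I expect the only real bookkeeping obstacle to be keeping the error terms sharp rather than merely finite: one must be careful that expanding $1/F(x)$ as a geometric series does not generate spurious larger-order terms, and that in the difference formulas of parts (c) and (d) the endpoint contributions from clearing $F(x)^{-1}$ and $F(y)^{-1}$ combine exactly into the products $(x^{2-1/\xi}+y^{2-1/\xi})(x^{-1/\xi}+y^{-1/\xi})$ (resp.\ $x^{-1/\xi}\log y+y^{-1/\xi}\log x$) claimed, with no leftover term of intermediate order. Everything else is a direct computation with power functions, so the proof is essentially a matter of organizing these incomplete-moment expansions and substituting.
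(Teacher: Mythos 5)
Your proposal takes essentially the same route as the paper's proof: write $\mu(x)$ and $\sigma^{2}(x)$ in terms of truncated/incomplete moments, expand each such moment by splitting the density into its exact Pareto part plus the $h$-correction bounded by $L_{F}y^{-\delta/\xi}$, and finish by routine algebra with the $1/F(x)$ expansion (for (e) the paper additionally centers at $\mu(x)$ and invokes the $c_{r}$ inequality, which is the same crude bound you give). The only blemish is a dropped factor of $\xi$ in your intermediate constants — the Pareto part gives $\int_{x}^{\infty}y\,dF(y)\approx\frac{\omega^{1/\xi}}{1-\xi}x^{1-1/\xi}$ and $\int_{x}^{\infty}y^{2}\,dF(y)\approx\frac{\omega^{1/\xi}}{1-2\xi}x^{2-1/\xi}$, not $\frac{\omega^{1/\xi}}{1/\xi-1}$ and $\frac{\omega^{1/\xi}}{1/\xi-2}$ — which is harmless since the main terms you ultimately match carry the correct constants.
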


\begin{proof}
(a) Follows from $\mu (x)=-\int_{x}^{\infty }udF(u)/F(x)$ and, under
Condition 1, $|\int_{x}^{\infty }udF(u)-\frac{\omega ^{1/\xi }}{1-\xi }%
x^{1-1/\xi }|\leq \int_{x}^{\infty }u|h(u)|du\leq Lx^{1-(1+\delta )/\xi }$
and $|F(x)-1+(x/\omega )^{-1/\xi }|\leq \int_{x}^{\infty }|h(u)|du\leq
Lx^{-(1+\delta )/\xi }$.

(b),(c),(d) Since $\sigma ^{2}(x)=-\mu (x)^{2}+(\sigma
_{0}^{2}-\int_{x}^{\infty }u^{2}dF(u))/F(x)$ for $\xi <1/2$ and $\sigma
^{2}(x)=-\mu (x)^{2}+\int_{-\infty }^{x}u^{2}dF(u)/F(x)$ for $\xi \geq 1/2$,
the results follow from $|1-F(x)|\leq Lx^{-1/\xi }$, $%
|\int_{x}^{y}u^{2}dF(u)-(\omega ^{1/\xi }/\xi )\int_{x}^{y}u^{1-1/\xi
}du|\leq L\int_{x}^{y}u^{1-(1+\delta )/\xi }du$ via Condition 1 and the
result in part (a).

(e) Follows from $\int |u|^{3}d\tilde{F}_{x}(u)=\int_{-\infty }^{x}|u-\mu
(x)|^{3}dF(u)/F(x)\leq L\int_{-\infty }^{x}|u|^{3}dF(u)+L|\mu (x)|^{3}$ by
the $c_{r}$ inequality and Condition 1.
\end{proof}

\begin{lemma}
\label{lm:expecT}Under Condition 1

(a) with $\tilde{T}_{n}=\max (T_{n},x_{0})$, $\mathbb{E}[\tilde{T}%
_{n}^{\alpha }]\leq C(n/k)^{\alpha \xi }$ for all $0\leq \alpha <1/\xi $

(b) with $\tilde{\tau}_{n}=\max (\omega n^{\xi }\Gamma _{k}^{-\xi },x_{0})$, 
$\mathbb{E}[\tilde{\tau}_{n}^{-\alpha }]\leq C(n/k)^{-\alpha \xi }$ for all $%
\alpha \geq 0$.
\end{lemma}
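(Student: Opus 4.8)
The plan is to exploit the extreme value approximation (\ref{Reiss555}) to transfer the moment computation from the order statistic $T_n = X_{n-k+1:n}$ to the explicit variable $\omega n^\xi \Gamma_k^{-\xi}$, and then to compute the relevant Gamma-function moments directly. For part (a), first I would write $\mathbb{E}[\tilde T_n^\alpha] = \int_0^\infty \mathbb{P}(\tilde T_n^\alpha > s)\,ds$ and split the range at some fixed level. On the bounded part the contribution is $O(1) = O((n/k)^{\alpha\xi})$ trivially since $n/k \geq 1$. On the tail, I would use (\ref{Reiss555}) applied to the single coordinate $X_{n-k+1:n}$ (i.e. $k=1$ slices of the event, or rather the marginal event $\{n^{-\xi}\omega^{-1} T_n \in B\}$): this gives $\mathbb{P}(T_n > t) = \mathbb{P}(\omega n^\xi \Gamma_k^{-\xi} > t) + O((k/n)^\delta k^{1/2} + k/n)$ uniformly. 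Since $\alpha < 1/\xi$, the moment $\mathbb{E}[(\omega n^\xi \Gamma_k^{-\xi})^\alpha] = \omega^\alpha n^{\alpha\xi}\,\mathbb{E}[\Gamma_k^{-\alpha\xi}] = \omega^\alpha n^{\alpha\xi}\,\Gamma(k-\alpha\xi)/\Gamma(k)$ is finite, and by the standard asymptotics $\Gamma(k-\alpha\xi)/\Gamma(k) \asymp k^{-\alpha\xi}$, this is of order $(n/k)^{\alpha\xi}$ — exactly the claimed bound. The error term from (\ref{Reiss555}), after integrating against $ds$ over the relevant range, needs to be checked to be dominated by $(n/k)^{\alpha\xi}$; this holds because $(k/n)^\delta k^{1/2} + k/n \leq C$ while $(n/k)^{\alpha\xi} \geq 1$, and the integration range where the $\mathbb{P}$'s differ appreciably can be truncated at $O((n/k)^\xi)$ up to negligible terms using a crude bound like Markov's inequality on $T_n$ via $F$.

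For part (b), the computation is purely in terms of the Gamma variables and needs no transfer: $\mathbb{E}[\tilde\tau_n^{-\alpha}] = \mathbb{E}[\max(\omega n^\xi \Gamma_k^{-\xi}, x_0)^{-\alpha}] \leq \mathbb{E}[(\omega n^\xi \Gamma_k^{-\xi})^{-\alpha}] = \omega^{-\alpha} n^{-\alpha\xi}\,\mathbb{E}[\Gamma_k^{\alpha\xi}]$. Now $\mathbb{E}[\Gamma_k^{\alpha\xi}] = \Gamma(k+\alpha\xi)/\Gamma(k) \asymp k^{\alpha\xi}$ for any fixed $\alpha\xi \geq 0$, and this is finite for all $\alpha \geq 0$ (unlike part (a), there is no upper restriction, since positive moments of a Gamma variable always exist). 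Hence $\mathbb{E}[\tilde\tau_n^{-\alpha}] \leq C n^{-\alpha\xi} k^{\alpha\xi} = C(n/k)^{-\alpha\xi}$, as claimed. The only subtlety is the truncation at $x_0$, which can only decrease $\tilde\tau_n^{-\alpha}$ relative to $(\omega n^\xi \Gamma_k^{-\xi})^{-\alpha}$, so the inequality is in the right direction and no further work is needed.

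I would organize the writeup as: (i) recall $\mathbb{E}[\Gamma_i^{-\xi}] = \Gamma(i-\xi)/\Gamma(i)$ and more generally $\mathbb{E}[\Gamma_k^{\beta}] = \Gamma(k+\beta)/\Gamma(k)$ for $\beta > -k$, together with the elementary asymptotic $\Gamma(k+\beta)/\Gamma(k) \asymp k^\beta$ (valid for fixed $\beta$ and $k \geq 1$, by e.g. the Stirling expansion or monotonicity/convexity of $\log\Gamma$); (ii) prove (b) in two lines as above; (iii) prove (a) by the layer-cake/tail-integral argument, invoking (\ref{Reiss555}) for the marginal distribution of $T_n$ and a crude Markov bound to control the upper tail of the integral.

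The main obstacle I anticipate is making the error-term bookkeeping in part (a) fully rigorous: (\ref{Reiss555}) controls the \emph{difference} of probabilities uniformly, but turning this into a bound on the difference of $\alpha$-th moments requires integrating over $s$, and one must argue that only a bounded range of $s$ (in units of $(n/k)^{\alpha\xi}$) contributes — otherwise the uniform $O((k/n)^\delta k^{1/2}+k/n)$ error, multiplied by an unbounded integration length, could blow up. The fix is to bound the far tail $\mathbb{P}(T_n > t)$ directly for $t$ large using Condition 1 (the tail of a single summand), rather than through the approximation; since $\alpha < 1/\xi$, these direct tail bounds are integrable and contribute only $O((n/k)^{\alpha\xi})$. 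This restriction $\alpha < 1/\xi$ is exactly why the hypothesis appears in (a) but not in (b).
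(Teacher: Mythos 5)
Part (b) of your plan is correct and essentially the paper's own argument (the paper bounds $\mathbb{E}[(\Gamma _{k}/k)^{\alpha \xi }]$ by Jensen's inequality rather than via $\Gamma (k+\beta )/\Gamma (k)\asymp k^{\beta }$, but the two are interchangeable). The problem is part (a), and it is exactly the obstacle you flag yourself: the fix you propose does not close. The error in (\ref{Reiss555}) is $(k/n)^{\delta }k^{1/2}+k/n$, which (i) is not $\leq C$ for all $1\leq k<n$ unless one imposes $k\lesssim n^{2\delta /(1+2\delta )}$ (the lemma carries no such restriction, and the paper's proof of it needs none), and (ii) even granting such a restriction, this error must be multiplied by the length of the $s$-range on which you rely on the transfer to $\Gamma _{k}$, while beyond that range you propose a bound via ``the tail of a single summand''. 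That single-summand (union/Markov) bound is $\mathbb{P}(T_{n}>t)\leq n(1-F(t))\leq \bar{L}nt^{-1/\xi }$, which at the relevant scale $t=(n/k)^{\xi }y$ equals $\bar{L}ky^{-1/\xi }$: it loses a factor $k$, because it ignores that $\{T_{n}>t\}$ requires at least $k$ exceedances, not one. Balancing the integrated (\ref{Reiss555}) error against this far-tail contribution forces a condition of the form $k\lesssim n^{1-\alpha \xi }$, which is neither assumed in the lemma nor implied by the theorem's standing assumption $k\leq n^{2\delta /(1+2\delta )}$ (take $\delta $ large and $\xi $ close to $1/2$; the bound-minimizing $k^{\ast }$ of Lemma 1 then violates it). So the bookkeeping genuinely fails rather than merely ``needing care''.

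The paper's proof takes a different and simpler route, which you should adopt: bound the tail of $Y_{n}=(k/n)^{\xi }\tilde{T}_{n}$ directly and uniformly in $(n,k)$, with no appeal to (\ref{Reiss555}) at all. Since $\{T_{n}\geq (n/k)^{\xi }y\}=\{1-F(T_{n})\leq 1-F((n/k)^{\xi }y)\}\subseteq \{U_{k:n}\leq \bar{L}y^{-1/\xi }k/n\}$, Lemma 3.1.2 of Reiss (1989), $\mathbb{P}(U_{k:n}\leq \tfrac{k}{n+1}u)\leq L(eu)^{k}$, gives $\mathbb{P}(Y_{n}\geq y)\leq L(\bar{L}ey^{-1/\xi })^{k}\leq Ly^{-1/\xi }$ for $y\geq (\bar{L}e)^{\xi }$, and then $\mathbb{E}[Y_{n}^{\alpha }]\leq C$ for every $\alpha <1/\xi $ by the layer-cake formula; this is where the restriction $\alpha <1/\xi $ enters. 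If you want to avoid citing Reiss's lemma, the minimal repair of your ``crude bound'' is Markov's inequality applied to the exceedance count $N_{t}=\#\{i:X_{i}>t\}$: $\mathbb{P}(T_{n}>t)=\mathbb{P}(N_{t}\geq k)\leq n(1-F(t))/k\leq \bar{L}y^{-1/\xi }$ at $t=(n/k)^{\xi }y$, which already yields the clean uniform tail and makes the entire detour through (\ref{Reiss555}) and the Gamma-moment computation unnecessary.
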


\begin{proof}
(a) Let $Y_{n}=(k/n)^{\xi }\tilde{T}_{n}$, so that we need to show that $%
\mathbb{E}[Y_{n}^{\alpha }]$ is uniformly bounded or, equivalently, that $%
\mathbb{P}(Y_{n}\geq y)y^{\alpha -1}$ is uniformly integrable. We have, for $%
y>x_{0}$%
\begin{eqnarray*}
\mathbb{P}(Y_{n}\left. \geq \right. x) &=&\mathbb{P}(T_{n}\geq (n/k)^{\xi }y)
\\
&=&\mathbb{P}(1-F(T_{n})\leq 1-F((n/k)^{\xi }y)) \\
&\leq &\mathbb{P}(U_{k:n}\leq \bar{L}y^{-1/\xi }k/n)
\end{eqnarray*}%
where $U_{k:n}$ is the $k$th order statistic of $n$ i.i.d. uniform $[0,1]$
variables, and $\bar{L}$ is such that $1-F(x)\leq \bar{L}x^{-1/\xi }$ for
all $x\geq x_{0}$. By Lemma 3.1.2 of \cite{Reiss89}, for all $u>0$, $\mathbb{%
P}(U_{k:n}\leq \frac{k}{n+1}u)\leq L(eu)^{k}$. Thus, $\mathbb{P}(Y_{n}\geq
y)\leq L(\bar{L}y^{-1/\xi }e)^{k}\leq Ly^{-1/\xi }$, where the last
inequality holds for all $y\geq (\bar{L}e)^{\xi }$, and the result follows.

(b) Clearly, $\mathbb{E}[\tilde{\tau}_{n}^{-\alpha }]\leq (n/k)^{-\alpha \xi
}\mathbb{E}[(\Gamma _{k}/k)^{\alpha \xi }]$. For $0\leq \alpha \xi \leq 1$, $%
\mathbb{E}[(\Gamma _{k}/k)^{\alpha \xi }]\leq \mathbb{E}[\Gamma
_{k}/k]^{1/(\alpha \xi )}=1$ while for $\alpha \xi >1$, $\mathbb{E}[(\Gamma
_{k}/k)^{\alpha \xi }]=\mathbb{E}[(k^{-1}\sum_{i=1}^{k}E_{i})^{\alpha \xi
}]\leq \mathbb{E}[k^{-1}\sum_{i=1}^{k}E_{i}^{\alpha \xi }]\leq C$ by two
applications of Jensen's inequality.
\end{proof}

\bigskip

\textbf{Proof of Theorem 1.}

We can assume $k\leq n^{\frac{2\delta }{1+2\delta }}$ in the following,
since otherwise, there is nothing to prove. Let $\tilde{T}_{n}=\max
(T_{n},x_{0})$. Lemma 3.1.1 in \cite{Reiss89} implies that under Condition
1, $\mathbb{P}(\tilde{T}_{n}\neq T_{n})\leq Ck/n$. Write $H_{n}(s)=\mathbb{P}%
(n^{-\gamma }S_{n}\leq s)$.

Assume first $1/3<\xi \leq 1/2$. We have 
\begin{equation*}
H_{n}(s)=\mathbb{E}\left[ \mathbb{P}\left( \sum_{i=1}^{n-k}\frac{X_{i:n}-\mu
(T_{n})}{(n-k)^{1/2}\sigma (T_{n})}\leq \frac{s/a_{n}-%
\sum_{i=k+1}^{n}X_{i:n}-(n-k)\mu (T_{n})}{(n-k)^{1/2}\sigma (T_{n})}%
|X_{n}^{e}\right) \right] .
\end{equation*}%
Note that conditional on $X_{n}^{e}$, the distribution of $%
\sum_{i=1}^{n-k}X_{i:n}$ is the same as that of the sum of i.i.d. draws from
the truncated distribution $\tilde{F}_{T_{n}}$ with mean $\mu (T_{n})$ and
variance $\sigma (T_{n})$. The Berry-Esseen bound hence implies 
\begin{equation*}
\sup_{z}\left\vert \mathbb{E}\left[ \mathbf{1}[\sum_{i=1}^{n-k}\frac{%
X_{i:n}-\mu (T_{n})}{(n-k)^{1/2}\sigma (T_{n})}\leq z]|X_{n}^{e}\right]
-\Phi (z)\right\vert \leq C(n-k)^{-1/2}\frac{\int |x|^{3}d\tilde{F}%
_{T_{n}}(x)}{\sigma ^{3}(T_{n})}
\end{equation*}%
where $\Phi (z)=P(Z\leq z)$. Replacing $T_{n}$ by $\tilde{T}_{n}$, by Lemma %
\ref{lm:con1} (e), $\int |x|^{3}d\tilde{F}_{\tilde{T}_{n}}(x)\leq C(\tilde{T}%
_{n})^{3-1/\xi }$ and $\sigma ^{3}(\tilde{T}_{n})\geq \sigma ^{3}(x_{0})$
a.s. From Lemma \ref{lm:expecT} (a), $\mathbb{E}[\tilde{T}_{n}^{3-1/\xi
}]\leq C(n/k)^{3\xi -1}$, so that%
\begin{equation*}
\sup_{s}\left\vert H_{n}(s)-\mathbb{E}\Phi \left( \frac{s/a_{n}-%
\sum_{i=k+1}^{n}X_{i:n}-(n-k)\mu (\tilde{T}_{n})}{(n-k)^{1/2}\sigma (\tilde{T%
}_{n})}\right) \right\vert \leq C(n^{-1/2}(n/k)^{3\xi -1}+k/n).
\end{equation*}%
From (\ref{Reiss555}), with $\tau _{n}=\omega (n/\Gamma _{k})^{\xi }$, $%
\sup_{s}\left\vert H_{n}(s)-H_{n}^{1}(s)\right\vert \leq
C(n^{-1/2}(n/k)^{3\xi -1}+(k/n)^{\delta }k^{1/2}+k/n)$, where 
\begin{equation*}
H_{n}^{1}(s)=\mathbb{E}\Phi \left( \frac{s/a_{n}-\omega n^{\xi
}\sum_{i=1}^{k}\Gamma _{i}^{-\xi }-(n-k)\mu (\tau _{n})}{(n-k)^{1/2}\sigma
(\tau _{n})}\right) .
\end{equation*}%
Let $\tilde{\tau}_{n}=\max (\tau _{n},x_{0})$ and note that by (\ref%
{Reiss555}), $P(\tilde{\tau}_{n}\neq \tau _{n})\leq \mathbb{P}(\tilde{T}%
_{n}\neq T_{n})+C((k/n)^{\delta }k^{1/2}+k/n)$.

Now focus on the claim in part (a). By Lemma \ref{lm:con1} (a) and (b), $%
|\mu (\tilde{\tau}_{n})+\frac{\omega ^{1/\xi }}{1-\xi }\frac{\tilde{\tau}%
_{n}^{1-1/\xi }}{1-(\tilde{\tau}_{n}/\omega )^{-1/\xi }}|\leq C\tilde{\tau}%
_{n}^{1-(1+\delta )/\xi }$ and $|\sigma ^{2}(\tilde{\tau}_{n})-\sigma
_{0}^{2}+\frac{\omega ^{1/\xi }}{1-2\xi }\tilde{\tau}_{n}^{2-1/\xi }|\leq
C\max (\tilde{\tau}_{n}^{2-(1+\delta )/\xi },\tilde{\tau}_{n}^{2-2/\xi },%
\tilde{\tau}_{n}^{-1/\xi })$ a.s. Thus, exploiting that $\phi (z)=d\Phi
(z)/dz$ and $|z|\phi (z)$ are uniformly bounded, and $0<\sigma
^{2}(x_{0})\leq \sigma ^{2}(\tilde{\tau}_{n})\leq \sigma _{0}^{2}$ a.s.,
exact first order Taylor expansions and Lemma \ref{lm:expecT} (b) yield 
\begin{multline*}
\sup_{s}\left\vert H_{n}^{1}(s)-\mathbb{E}\Phi \left( \frac{s-n^{\xi
-1/2}\omega \sum_{i=1}^{k}\Gamma _{i}^{-\xi }-n^{\xi -1/2}\frac{\omega }{%
1-\xi }\Gamma _{k}^{1-\xi }\psi _{n}}{\left( \sigma _{0}^{2}-\frac{\omega
^{2}}{1-2\xi }(\Gamma _{k}/n)^{1-2\xi }\right) _{+}^{1/2}}\right) \right\vert
\\
\leq C((k/n)^{\delta }k^{1/2}+k/n+n^{1/2}(n/k)^{\xi -1-\delta }+(n/k)^{2\xi
-(1+\delta )}+(n/k)^{2\xi -2})
\end{multline*}%
where $\psi _{n}=1+\frac{\Gamma _{k}/n-k/n}{1-\Gamma _{k}/n}$. Let $\tilde{%
\Gamma}_{k}=n(\tilde{\tau}_{n}/\omega )^{-1/\xi }$, so that $P(\tilde{\Gamma}%
_{k}\neq \Gamma _{k})=P(\tilde{\tau}_{n}\neq \tau _{n})$, and we can replace
any $\Gamma _{k}$ by $\tilde{\Gamma}_{k}$ in the last expression without
changing the form of the right hand side. Note that $1-\tilde{\Gamma}%
_{k}/n\geq 1-(x_{0}/\omega )^{-1/\xi }>0$ and $\sigma _{0}^{2}-\frac{\omega
^{2}}{1-2\xi }(\tilde{\Gamma}_{k}/n)^{1-2\xi }\geq \sigma ^{2}(x_{0})$ a.s.
Thus, by another exact Taylor expansion and $\mathbb{E}[\Gamma _{k}^{1-\xi
}|\Gamma _{k}/n-k/n|]^{2}\leq \mathbb{E}[\Gamma _{k}^{2-2\xi }]\mathbb{E}%
[(\Gamma _{k}/n-k/n)^{2}]\leq Ck^{3-2\xi }/n^{2}$, we can replace $\psi _{n}$
by $1$ at the cost of another error term of the form $C(n/k)^{-3/2+\xi }$.
The result in part (a) now follows after eliminating dominated terms, and
the proof of part (b) for $1/3<\xi <1/2$ follows from the same steps.

So consider $\xi =1/2$. Let $A_{n}$ be the event $(2k)^{-\xi }\leq \Gamma
_{k}^{-\xi }\leq (k/2)^{-\xi }$. By Chebychev's inequality, $\mathbb{P}%
(A_{n})=\mathbb{P}(1/2\leq k^{-1}\sum_{i=1}^{k}E_{i}\leq 2)\leq C/k$.
Conditional on $A_{n}$, and recalling that $k\leq n^{\frac{2\delta }{%
1+2\delta }}$, $C^{-1}\leq \sigma ^{2}(\tilde{\tau}_{n})/\log (n)\leq C$, $%
|\sigma ^{2}(\tilde{\tau}_{n})-\sigma ^{2}(\omega u_{n})-\frac{\omega
^{1/\xi }}{\xi }\int_{\omega u_{n}}^{\tau _{n}}y^{1-1/\xi }dy|\leq
C((n/k)^{2\xi -1-\delta }+(k/n)\log (n))$ and $|\mu (\tilde{\tau}_{n})+\frac{%
\omega ^{1/\xi }}{1-\xi }\frac{\tilde{\tau}_{n}^{1-1/\xi }}{1-(\tilde{\tau}%
_{n}/\omega )^{-1/\xi }}|\leq C(n/k)^{\xi -1-\delta }$ a.s.~ by Lemma \ref%
{lm:con1} (a) and (d). Exact first order Taylor expansions of $H_{n}^{1}(s)$
thus yield%
\begin{multline*}
\sup_{s}\left\vert H_{n}^{1}(s)-\mathbb{E}\Phi \left( \frac{s(\log
n)^{1/2}-\omega n^{1/2}\sum_{i=1}^{k}\Gamma _{i}^{-1/2}-n^{1/2}\frac{\omega 
}{1-\xi }\Gamma _{k}^{1/2}\psi _{n}}{\left( \sigma ^{2}(\omega
u_{n})+2\omega ^{2}\int_{u_{n}}^{(n/\Gamma _{k})^{1/2}}y^{-1}dy\right)
_{+}^{1/2}}\right) \right\vert \\
\leq C(k^{-1/2}+(k/n)^{\delta }k^{1/2}+k/n+n^{1/2}(n/k)^{-1/2-\delta
}+(k/n)^{-\delta })
\end{multline*}%
and replacing $\psi _{n}$ by unity induces an additional error term of the
form $C(n/k)^{-1}$ by the same arguments as employed above (and recalling
that $\mathbb{P}(A_{n})\leq C/k$).

We are left to prove the claim for $1/2<\xi <1$. Note that the distribution
of $\sum_{i=1}^{n-k}X_{i:n}$ conditional on $X_{n}^{e}$ only depends on $%
X_{n}^{e}$ through $T_{n}$. Let $\Phi _{n,t}$ be the conditional
distribution function of $\sum_{i=1}^{n-k}\frac{X_{i:n}-\mu (T_{n})}{%
(n-k)^{1/2}\sigma (T_{n})}$ given $T_{n}=t$. For future reference, note that
by Theorem 1.1 in \cite{Goldstein10}, $||\Phi _{n,t}-\Phi ||_{1}=\int |\Phi
(z)-\Phi _{n,t}(z)|dz\leq (n-k)^{-1/2}\int |y|^{3}dF_{t}(y)/\sigma (t)^{3}$,
so that by Lemma \ref{lm:con1} (c) and (e), $||\Phi _{n,t}-\Phi ||_{1}\leq
Cn^{-1/2}t^{1/(2\xi )}$ for $t\geq x_{0}$. We have%
\begin{equation*}
H_{n}(s)=\mathbb{E}\Phi _{n,T_{n}}\left( \frac{n^{\xi
}s-\sum_{i=k+1}^{n}X_{i:n}-(n-k)\mu (T_{n})}{(n-k)^{1/2}\sigma (T_{n})}%
\right)
\end{equation*}%
so that by (\ref{Reiss555}), $\sup_{s}\left\vert
H_{n}(s)-H_{n}^{2}(s)\right\vert \leq C((k/n)^{\delta }k^{1/2}+k/n)$, where 
\begin{equation*}
H_{n}^{2}(s)=\mathbb{E}\Phi _{n,\tau _{n}}\left( \frac{n^{\xi }s-\omega
n^{\xi }\sum_{i=1}^{k}\Gamma _{i}^{-\xi }-(n-k)\mu (\tau _{n})}{%
(n-k)^{1/2}\sigma (\tau _{n})}\right) .
\end{equation*}

Let $U$ be a uniform random variable on the unit interval, independent of $%
(\Gamma _{i})_{i=1}^{\infty }$, and let $\Phi _{n,t}^{-1}$ be the quantile
function of $\Phi _{n,t}$. Then%
\begin{equation*}
H_{n}^{2}(s)=\mathbb{P}\left( n^{-\xi }(n-k)^{1/2}\sigma (\tau _{n})\Phi
_{n,k,\tau _{n}}^{-1}(U)+n^{-\xi }(n-k)\mu (\tau _{n})+\omega
\sum_{i=1}^{k}\Gamma _{i}^{-\xi }\leq s\right) .
\end{equation*}%
Since $\Gamma _{1}/\Gamma _{2},\Gamma _{2}/\Gamma _{3},\ldots ,\Gamma
_{k-1}/\Gamma _{k},\Gamma _{k}$ are independent (cf. Corollary 1.6.11 of 
\cite{Reiss89}), the distribution of $(\Gamma _{1}/\Gamma _{2})^{-\xi }$
conditional on $\Gamma _{2},\Gamma _{3},\ldots ,\Gamma _{k}$ is the same as
that conditional on $\Gamma _{2}$, which by a direct calculation is found to
be Pareto with parameter $1/\xi $. Thus, with $G(z)=\mathbf{1}%
[z>1](1-z^{-1/\xi })$, 
\begin{equation*}
H_{n}^{2}(s)=\mathbb{E}G\left( \frac{s-n^{-\xi }(n-k)^{1/2}\sigma (\tau
_{n})\Phi _{n,\tau _{n}}^{-1}(U)-n^{-\xi }(n-k)\mu (\tau _{n})-\omega
\sum_{i=2}^{k}\Gamma _{i}^{-\xi }}{\omega \Gamma _{2}^{-\xi }}\right) .
\end{equation*}%
Note that for arbitrary $a\geq 0$ and $y\in 
\mathbb{R}
$, with $g(z)=dG(z)/dz$ 
\begin{eqnarray*}
|\mathbb{E}G(y+a\Phi _{n,t}^{-1}(U))-\mathbb{E}G(y+aZ)| &=&|\int
G(y+az)d(\Phi _{n,t}(z)-\Phi (z))| \\
&=&a|\int (\Phi (z)-\Phi _{n,t}(z))g(y+az)dz| \\
&\leq &a\sup_{y}|g(y)|\cdot ||\Phi _{n,t}-\Phi ||_{1}
\end{eqnarray*}%
where the second equality stems from Riemann-Stieltjes integration by parts.
Conditional on the event $A_{n}$ as defined above, $||\Phi _{n,\tilde{\tau}%
_{n}}-\Phi ||_{1}\leq Ck^{-1/2}$, $C^{-1}(n/k)^{2\xi -1}\leq \sigma ^{2}(%
\tilde{\tau}_{n})\leq C(n/k)^{2\xi -1}$, $|\sigma ^{2}(\tilde{\tau}%
_{n})-\sigma ^{2}(\omega u_{n})-\frac{\omega ^{1/\xi }}{\xi }\int_{\omega
u_{n}}^{\tilde{\tau}_{n}}y^{1-1/\xi }dy|\leq C((n/k)^{2\xi -1-\delta
}+(n/k)^{2\xi -2})$ and $|\mu (\tilde{\tau}_{n})+\frac{\omega ^{1/\xi }}{%
1-\xi }\frac{\tilde{\tau}_{n}^{1-1/\xi }}{1-(\tilde{\tau}_{n}/\omega
)^{-1/\xi }}|\leq C(n/k)^{\xi -1-\delta }$ a.s.~by Lemma \ref{lm:con1} (a)
and (c). Thus, by exact first order Taylor expansions and exploiting that $%
g(z)$ is uniformly bounded and $\mathbb{E}[|Z|]$, $\mathbb{E}[\Gamma
_{2}^{\xi }]<C$, 
\begin{multline*}
\sup_{s}|H_{n}^{2}(s)-H_{n}^{3}(s)|\leq \\
C(k^{-1}+(k/n)^{\delta }k^{1/2}+k/n+k^{-\xi }+n^{1-\xi }(n/k)^{\xi -1-\delta
}+n^{1/2-\xi }((n/k)^{\xi -1/2-\delta }+(n/k)^{\xi -3/2}))
\end{multline*}%
where%
\begin{equation*}
H_{n}^{3}(s)=\mathbb{E}G\left( \frac{s-\omega \sum_{i=2}^{k}\Gamma
_{i}^{-\xi }-n^{1/2-\xi }\left( \sigma ^{2}(\omega u_{n})+\frac{\omega
^{1/\xi }}{\xi }\int_{\omega u_{n}}^{\tau _{n}}y^{1-1/\xi }dy\right)
_{+}^{1/2}Z+\frac{\omega }{1-\xi }\Gamma _{k}^{1-\xi }\Psi _{n}}{\omega
\Gamma _{2}^{-\xi }}\right) .
\end{equation*}%
As before, we can replace $\Psi _{n}$ by unity at the cost of another error
term of the form $Ck^{3/2-\xi }/n$, and the result follows after eliminating
dominating terms.

\bigskip

\bibliographystyle{econometrica}
\bibliography{diss}

\end{document}